%
%
%
%
\documentclass{amsart}
%
\usepackage{amsmath, amssymb, amsthm, amsfonts, amscd, mathrsfs}
\usepackage[T1]{fontenc}
\usepackage[utf8x,applemac]{inputenc}
\usepackage{graphicx,pict2e}

\usepackage{tikz}



\usepackage{hyperref}
\usepackage{amsfonts,esint}
\theoremstyle{plain}
\newtheorem{teo}{Theorem}[section]
\newtheorem{prop}[teo]{Proposition}

\newtheorem{lm}[teo]{Lemma}
\theoremstyle{definition}

\newtheorem{rem}{Remark}

\numberwithin{equation}{section}

\newcommand{\bemul}{\begin{multline*}}        

\newcommand{\pical}{\mathcal{P}}

\newcommand{\res}{\mathop{\hbox{\vrule height 7pt width .5pt depth 0pt \vrule height .5pt width 6pt depth 0pt}}\nolimits}
\newcommand{\deb}{\rightharpoonup}
\newcommand{\destar}{\overset{*}\deb}

\newcommand{\M}{\mathcal M}

\newcommand{\impl}{\Rightarrow}

\newcommand{\ve}{\varepsilon}

\newcommand{\R}{\mathbb R}

\DeclareMathOperator{\spt}{spt}
\DeclareMathOperator{\Lip}{Lip}


\begin{document}

\title[Dacorogna-Moser for minimal flows]{A Dacorogna-Moser approach to flow decomposition and minimal flow problems}
%
\author{Filippo Santambrogio}\address{Filippo Santambrogio, Laboratoire de Mathématiques d'Orsay,
 Université Paris-Sud,
 91405 Orsay cedex, FRANCE, e-mail address:\\
  {\tt filippo.santambrogio@math.u-psud.fr} ---  This paper is part of the ANR project ISOTACE ---
}
%
%
\begin{abstract} The papers describes an easy approach, based on a classical construction by Dacorogna and Moser, to prove that optimal vector fields in some minimal flow problem linked to optimal transport models (congested traffic, branched transport, Beckmann's problem\dots) are induced by a probability measure on the space of paths. This gives a new, easier, proof of a classical result by Smirnov, and allows handling optimal flows without taking care of the presence of cycles.\end{abstract}
%
%
%
\maketitle

\section*{Introduction}

Many transport optimization problems come under the form of a minimal flow problem: origins and destinations of the motion are modeled via two densities (or measures) $\mu$ and $\nu$ on a given domain $\Omega\subset\R^d$, and the unknown is how to connect them through a vector field $v:\Omega\to \R^d$ where $v(x)$ stands at every point for the density of flow passing through the point $x$, giving both the direction and the intensity of the movement. The fact that this vector field must connect $\mu$ to $\nu$ is expressed through a divergence condition: we impose $\nabla\cdot  v=\mu-\nu$, which means that the mass ``taken'' from the flow is $\mu$ and the mass ``released'' at the end of the movement is $\nu$. This description of the motion is both Eulerian and statical. Eulerian because instead of following the particles in their motion from $\mu$ to $\nu$ it looks at what happens at each point, where particles coming from different points could pass at different moments. Statical because it ignores the time dimension, and $v(x)$ represents the overall flow passing through $x$ as if we did an average over time, or if we supposed the motion to be perpetual and cyclical (at every instant some mass is poured in $\Omega$ according to the distribution $\mu$ and at the same time withdrawn according to $\nu$).

Among all the feasible vector fields $v$, we minimize a criterion, looking for the $v$ where the overall motion is minimal in a certain sense. This typically means solving a problem 
$$\min\quad\left\{ F(|v|)\;:\; \nabla\cdot v=\mu-\nu\right\},$$
where we assume that $F$ only depends (locally or not) on the intensity of $v$, i.e. on $|v|$. $F$ is obviously supposed to be increasing in $|v|$, but at least two different behaviors should be highlighted. If $F$ is convex and superlinear, then it overpenalizes high concentrations of $|v|$; on the contrary, if it is concave, subadditive and sublinear, it prefers concentration rather than dispersion. These two behaviors correspond to two different models, both meaningful in transport applications. The convex case id useful to minimize traffic congestion,  where the overall total cost that we experience is a convex function of $|v|$, for instance $\int |v|^2$. If instead we need to build a transportation network, able to carry mass from $\mu$ to $\nu$, and we need to minimize the construction and management costs of it, we will try co concentrate as much as possible the movement on some common axes. In this case the overall cost that we consider for moving a mass $m$ is of the form $g(m)$ with $g$ subadditive, i.e. $g(m_1+m_2)<g(m_1)+g(m_2)$ (which favors concentration), and typically it could be of the form $m^\alpha$, for $\alpha<1$. Notice that we changed on purpose the notation from $|v|$ to $m$, since defining such a functional $F$ is not evident in this case and usually requires $v$ to be highly concentrated. The good framework is that of vector measures (and $|v|$ denotes in this case the total variation measure of $v$). This framework also fits the convex case; in the concave case, the functional $F$ will only be finite on measures concentrated on (possibly infinite) one-dimensional rectifiable graphs, and the quantity $m$ will represent the density of $|v|$ w.r.t. the one-dimensional measure: it stands for the amount of mass (and not the density of mass) passing through a precise point, and it will only be positive on a thin set, standing for the transportation network. Notice finally that the intermediate and concentration-neutral case where we just minimize the total mass of $v$, i.e. $||v||=\int |v|$ is also interesting. It has been introduced by the spatial economist M. Beckmann in the '50s and it turns out to be equivalent to the Monge-Kantorovich problem for the optimal transport between $\mu$ and $\nu$ with cost $|x-y|$ (see \cite{Monge,Kantorovich,villani} and \cite{strang} for the equivalence . In the same paper \cite{beck} Beckmann also proposed to minimize convex costs. This idea also appears in \cite{brenierextended} as a variant of the well-known Benamou-Brenier formula in a dynamical setting, and could also be considered now as a particular case of the non-linear mobility of Dolbeaut-Nazaret-Savaré, \cite{DolNazSav} (recently, the concave case as well got a sort of Benamou-Brenier formulation, see \cite{LorenzoeGiuseppe})

For the precise expressions of the minimization problem in the concave case, called {\it branched transport} problem, we refer to \cite{xia1} and to the monograph \cite{BookIrrigation}; the model comes from classical issues in graph optimization, see \cite{Gil,GilPol}, and it translates them into a continuous framework. The convex case, with application to traffic congestion, is also classical, and comes, in a graph setting, from the works of Wardrop and Beckmann, \cite{War,BecMcGWin}. 

However, the goal of this short paper is not to enter into details of these transport models, but only to address one single question which often arises when studying them.
In many cases it is important to be able to switch from an Eulerian model to a Lagrangian one. In Lagrangian models, instead of looking at what happens at each point, one looks at what happens at each particle, describing its trajectory. Yet, since particles are considered to be indistinguishable, it is enough to consider how many particles follow each possible trajectory, i.e. giving a measure on the space of possible trajectories. Usually, we consider a probability measure $Q$, called traffic plan, on the space $C:=\Lip([0,1], \Omega)$ of Lipschitz continuous paths valued in our domain $\Omega$, and we impose that it connects the measures $\mu$ and $\nu$, by requiring  $(e_0)_\#Q=\mu$ and $(e_1)_\#Q=\nu$, where $e_t:C\to\Omega$ denotes the evaluation map at time $t$, i.e. $e_t(\omega)=\omega(t)$ for every $\omega\in C$. Then, one needs to associate a positive measure $i_Q$ to $Q$ which will represent the traffic intensity generated by $Q$ and, possibly, a vector measure $v_Q$ standing for the flow generated by the same $Q$. This has been formalized in the traffic framework in \cite{CarJimSan} (for $i_Q$) and \cite{BraCarSan} (for $v_Q$), and we will recall the precise definitions in the next section. \cite{CarJimSan} also describes the equivalence between the optimization point of view (a planner decides where every agent moves) and the equilibrium point of view (every agent chooses her own path, but her cost depends on the choices of everybody, and we look for a Nash equilibrium), for the congestion case. Even if looking for an equilibrium is not equivalent to optimizing the total cost of the agents, there is a connection between the two notions (the equilibrium actually optimizes some total cost of a similar form, which makes this a potential game). It is important to stress that equilibrium conditions can only be stated in the Lagrangian framework. 

The problem of switching from Eulerian to Lagrangian formalisms amounts at the following question: when does a vector field $v$ with $\nabla\cdot v=\mu-\nu$ is induced by a traffic plan $Q$ (i.e. whether $v=v_Q$) with $(e_0)_\#Q=\mu$ and $(e_1)_\#Q=\nu$. This is a classical question, and a general answer is provided by the well-known and classical result by Smirnov, \cite{smirnov}, which is expressed in terms of currents. In his paper, Smirnov first studies the structure of cyclical currents (i.e. with $0$ divergence, Theorems A and B of his paper), and then turns to general currents (Theorem C) by proving a decomposition of the following form: every vector field $v$ can be decomposed into two parts, one is cyclical and the other is induced by a traffic plan $Q$. Also, the mass of $v$ is equal to the sum of the two masses, which was probably the most delicate contribution of Smirnov. In particular, this implies that any $v$ without cycles is induced by a traffic plan. Thus, many papers in branched transport have tried to use this idea to make a bridge between the Eulerian model by Xia (see \cite{xia1}) and the Lagragian models by Maddalena-Solimini-Morel and then Bernot-Caselles-Morel (see \cite{MadSolMor,BerCasMor}). The main tool was first proving that optimal fields $v$ have no cycles. In this way, the possible presence of cycles has become almost an obsession for many researches that have been carried on about branched transport (the congested models have been more or less preserved from this obsession, since proofs were easier in that case due to a smoother setting : $L^p$ vector fields instead of singular measures). Notice by the way that the meaning of the word ``cycles'' may be different according to the context: in branched transport, optimal solutions are tree-shaped and two different points are never joined by two different paths, while in congested traffic this is allowed (even encouraged), but what is forbidden is the presence of an oriented cycle, i.e. a completely useless circular path which does not contribute to the divergence but increases total traffic. The recent paper \cite{BraSol} introduces the interesting distinction between ``cycles'' (two different curves connecting the same two points) and ``loops'' (one closed curve).

This correspondence between the Eulerian and the Lagrangian descriptions is crucial to understand in depth the properties of the model, and is also very useful for some proofs. Moreover,  the Eulerian model is also discutable from a modelization point of view, since it allows for cancellations when we want to model two opposite vehicle flows. All these reasons stress the interest for Lagrangian models and, indeed, many papers have been devoted to this topic from a rater abstract point of view: for instance the papers by Paolini-Stepanov, Georgiev-Stepanov and Brasco-Petrache which exactly deal with these representation questions for currents, either in the Euclidean or metric setting, and they do it both for the applications to the transport optimization problems, and for geometric purposes: \cite{PaoliniStepanovFlat,PaoSte2012, PaoSte2013,GeoSte, BraPet}. These papers propose different approaches than Smirnov's one (in particular, Paolini-Stepanov, who prove in \cite{PaoSte2012, PaoSte2013} the same result as Smirnov in the general setting of metric spaces, go the opposite way: first they prove, by approximation from a network setting, that vector fields with no cycles are induced by a $Q$, then they pass to the general case). 

For the congestion case, \cite{BraCarSan} proposed an equivalence between the Lagrangian problem with $Q$ and the Eulerian one with $v$, with an explicit way to construct $Q$ from $v$ following an idea that Dacorogna and Moser developed in \cite{DacMos} for diffeomorphism purposes and first used in transport in \cite{EvaGan}. This required a regularity proof, but in particular an assumption on $\mu$ and $\nu$, which were supposed to have densities bounded from below. This assumption is indeed very important, and explains what is the true obstruction to the equality $v=v_Q$: the problem are not cycles, but cycles turning where there is no mass! indeed, there can be cyclical vector fields $v$ of the form $v_Q$, but in this case both $\mu$ and $\nu$ should give some positive mass to somewhere along the cycle. On the contrary, a cycle located outside of the supports of $\mu$ and $\nu$, cannot for sure be induced by an admissible $Q$ even if its divergence is equal to $\mu-\nu$. Notice also that \cite{BraCar} also proves, by Dacorogna-Moser techniques, the equality of the minimal values of the Eulerian and the Lagrangian problems for the congestion model, but does not take care of the representation of the optimal $v$ as a $v_Q$.

In the present paper we still use Dacorogna-Moser and we give (in Section 1, Proposition \ref{exiQ}) a new proof of Smirnov's Theorem C, which states that every $v$ can be decomposed as the sum of a vector field $v_Q$ induced by a traffic plan and a cycle $v-v_Q$, with no mass loss, i.e. $||v||=||v-v_Q||+||v_Q||$. The proof shares something of the spirit of Smirnov's one, which also followed the integral curves of some vector fields, but is shorter, since it does not require to start the analysis from the cyclical case. Not only, exactly as from Smirnov's result, one can conclude the following fact: for every $v$ there is a vector field $v_Q$ with $|v_Q|\leq |v|$ and $|v_Q|\neq |v|$ unless $v=v_Q$. This allows to prove that optimal vector fields $v$ are of the form $v=v_Q$ every time that we minimize $F(|v|)$ and $F$ has some strict monotonicity properties (Theorem  \ref{car of min}). We will show in particular the application of this result to the case of the Beckmann problem $\min \int \!|v|$.  What is quite astonishing, but it is not really a novelty of this paper (just a different way of stating the results), is the fact that there is no need to care about cycles, or to mention them, even if it happens that the possible difference $v-v_Q$ should indeed come from the presence of cycles in $v$ which do not correspond to some mass in the measures $\mu$ and $\nu$. This was indeed one of the first motivations of the paper (proving $v=v_Q$ for optimal flows $v$ without mentioning cycles), but it turned out later that a new proof of Theorem C by Smirnov was also possible.

No really new result is contained in this paper, just some new - and hopefully simpler - approaches. Part of this framework and of these ideas have been used both for the presentation in the Mini-symposium {\it ``Des probabilités aux EDP par le transport optimal''} at the conference SMAI 2013, and for the mini-course the author gave in August 2013 at MSRI in Berkeley during the program {\it ``Optimal Transport: Geometry and Dynamics''.}

\section{Traffic intensity and traffic flows for measures on curves}

\subsection{Definitions and first properties}

We introduce in this section the main objects of our analysis, i.e. a vector field $v_Q$, called traffic flow, and a scalar intensity $i_Q$ associated to a probability measure $Q$ on the set of paths.  

Let us introduce some notations and be more precise. First, let us denote the space of scalar measures on $\Omega$ by $\M(\Omega)$, the set of positive measures by $\M_+(\Omega)$ and the space of vector measures valued in $\R^d$ by $\M^d(\Omega)$. For a vector measure $v\in\M^d(\Omega)$, we denote by $|v|\in\M_+(\Omega)$ its total variation measure, which is such that $v=\xi\cdot|v|$ with $|\xi|=1,\,|v|-$a.e.. The norm in the space $\M^d(\Omega)$ is given by $||v||=|v|(\Omega)$. Notice that we also have $||v||=\sup \{\int X\cdot dv\,:\,||X||_\infty\leq 1\}$. In all the paper $\Omega$ is considered to be a compact set, and measures on $\Omega$ can give mass to its boundary (we do not make distinctions between $\Omega$ and $\bar\Omega$). In all these measure spaces, the symbol $\deb$ denotes convergence in duality with $C^0$ functions and replaces the symbol $\destar$ for simplicity.

Given an absolutely curve $\omega:[0,1]\to \Omega$ and a continuous function $\varphi$, let us set
\begin{equation}
L_{\varphi}(\sigma):=\int_0^1 \varphi(\omega(t)) \vert \omega'(t)\vert dt.
\end{equation}
This quantity is the length of the curve weighted with the metric $\varphi$. When we take $\varphi=1$ we get the usual length of $\omega$ and we denote it by $L(\omega)$ instead of $L_1(\omega)$.

We consider probability measures $Q$ on the space $C:=\Lip([0,1], \Omega)$. We restrict ourselves to measures $Q$ such that $\int L(\omega)\,dQ(\omega)<+\infty$: these measures will be called {\it traffic plans}, according to a terminology introduced in \cite{BerCasMor}. We endow the space $C$ with the uniform convergence.  Notice that Ascoli-Arzelà's theorem guarantees that the sets $\{\omega\in C\,:\,\Lip(\omega)\leq c\}$ are compact for every $c$.  We will associate two measures on $\Omega$ to such a $Q$. The first is a scalar one, called {\it traffic intensity} and denoted by $i_Q\in \M_+(\Omega)$; it is defined by
\[
\int \varphi \,d i_Q:= \int_{C} \Big(\int_0^1 \varphi(\omega(t)) \vert \omega'(t)\vert dt \Big) dQ(\omega)
=\int_C L_{\varphi}(\omega) d Q(\omega).
\]
for all $\varphi\in C(\Omega, \R_+)$. This definition (taken from \cite{CarJimSan}) is a generalization of the notion of transport density which is nowadays classical in the Monge case of optimal transport theory, see \cite{EvaGan,FelMcC,ambpra,simpleproof}. The interpretation is the following: for a subregion $A$, $i_Q(A)$ represents the total cumulated  traffic in $A$ induced by $Q$, i.e. for every path we compute ``how long'' does it stay in $A$, and then we average on paths. 

We also associate a vector measure $v_Q$ to this traffic plan $Q$, defined  through 
\[\forall X\in C(\Omega; \R^d)\quad\int_{\Omega} X\cdot dv _Q:=\int_{C}  \left(\int_0^1 X(\omega(t))\cdot \omega' (t) dt\right) d Q(\omega).\]
We will call $v_Q$ {\it traffic flow} induced by $Q$. Taking a gradient field $X=\nabla \psi$ in the previous definition yields
\[ \int_{\Omega} \nabla \psi \cdot d { v} _Q=\int_{C}  [\psi(\omega(1))-\psi(\omega(0))] d Q(\omega)=\int_{\Omega} \psi \,d((e_1)_\# Q-(e_0)_\# Q).\]
From now on, we will restrict our attention to {\it admissible} traffic plans $Q$, i.e. traffic plans such that $(e_0)_\# Q=\mu$ and $(e_1)_\# Q=\nu$, where $\mu$ and $\nu$ are two prescribed probability measures on $\Omega$.  This means that
\[ \nabla\cdot v_Q=\mu-\nu \]
and hence $v_Q$ is an admissible flow connecting $\mu$ and $\nu$. Notice that the divergence is always considered in a weak (distributional) sense, and automatically endowed with Neumann boundary conditions, i.e. when we say $\nabla\cdot v=f$ we mean $\int \nabla\psi\cdot dv=-\int \psi \,df$ for all $\psi\in C^1(\Omega)$, without any condition on the boundary behavior of the test function $\psi$.
 
Coming back to $v_Q$, it is easy to check that $\vert v_Q \vert \leq i_Q,$
where $|v_Q|$ is the total variation measure of the vector measure $v_Q$. 
This last inequality is not in general an equality, since the curves of $Q$ could produce some cancellations (imagine a non-negligible amount of curves passing through the same point with opposite directions, so that $v_Q=0$ and $i_Q>0$).

We need some properties of the traffic intensity and traffic flow.
\begin{prop}\label{trois prop of iQ}
Both $v_Q$ and $i_Q$ are invariant under reparametrization (i.e., if $T:C\to C$ is a map such that for every $\omega$ the curve $T(\omega)$ is just a reparametrization in time of $\omega$, then $v_{T_\#Q}=v_Q$ and $i_{T_\#Q}=i_Q$).

For every $Q$, the total mass $ i_Q(\Omega)$ equals the average length of the curves according to $Q$, i.e. $\int_C L(\omega)\,dQ(\omega)=i_Q(\Omega)$. In particular, $v_Q$ and $i_Q$ are finite measures thanks to the definition of traffic plan.

If $Q_n\deb Q$ and $i_{Q_n}\deb i$, then $i\geq i_Q$.

If $Q_n\deb Q$, $v_{Q_n}\deb v$  and $i_{Q_n}\deb i$, then $||v-v_Q||\leq i(\Omega)-i_Q(\Omega)$. In particular, if $Q_n\deb Q$ and $i_{Q_n}\deb i_Q$, then $v_{Q_n}\deb v_Q$.
\end{prop}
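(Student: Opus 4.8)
The plan is to prove the four assertions of Proposition \ref{trois prop of iQ} in order, relying throughout on the fact that $v_Q$ and $i_Q$ are defined by testing against continuous functions, so that the natural topology is duality with $C^0$. For the invariance under reparametrization, I would observe that both defining integrals involve only the quantities $\int_0^1 \varphi(\omega(t))|\omega'(t)|\,dt$ and $\int_0^1 X(\omega(t))\cdot\omega'(t)\,dt$, which are geometric: the first is the $\varphi$-weighted length $L_\varphi(\omega)$ and the second is the line integral of $X$ along the oriented curve. Both are manifestly invariant under an increasing (or orientation-preserving) time change, by the standard change-of-variables formula for the parameter $t$. So if $T(\omega)$ reparametrizes $\omega$, the integrands coincide pointwise, and pushing forward $Q$ by $T$ leaves the averages $\int \cdots\,dQ$ unchanged.

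For the second assertion, I would simply take $\varphi\equiv 1$ in the definition of $i_Q$: this gives $i_Q(\Omega)=\int_C L_1(\omega)\,dQ(\omega)=\int_C L(\omega)\,dQ(\omega)$, which is finite precisely because $Q$ is a traffic plan. Finiteness of $v_Q$ then follows from the pointwise inequality $|v_Q|\leq i_Q$ already noted in the text, which gives $\|v_Q\|=|v_Q|(\Omega)\leq i_Q(\Omega)<+\infty$.

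The third assertion is the first genuinely analytic point. Here I expect to use a lower-semicontinuity argument. Fix $\varphi\in C(\Omega,\R_+)$; the map $Q\mapsto \int_C L_\varphi(\omega)\,dQ(\omega)$ is \emph{not} continuous for weak convergence, because $L_\varphi$ is only lower semicontinuous on $C$ (uniform convergence of curves can decrease length in the limit, as in oscillating approximations of a segment), not continuous. The standard tool is that $L_\varphi$ is l.s.c. with respect to uniform convergence, hence, writing $L_\varphi$ as a supremum of continuous bounded functionals and applying the portmanteau-type lower-semicontinuity of integration of l.s.c. functionals under weak convergence of measures, I would get
\[
\int \varphi\,di_Q=\int_C L_\varphi(\omega)\,dQ(\omega)\leq \liminf_n \int_C L_\varphi(\omega)\,dQ_n(\omega)=\lim_n \int \varphi\,di_{Q_n}=\int\varphi\,di.
\]
Since this holds for every nonnegative $\varphi$, it yields $i_Q\leq i$ as measures. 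The main obstacle will be justifying this lower-semicontinuity cleanly: the cleanest route is to express $L_\varphi(\omega)=\sup\big\{\int_0^1 X(\omega(t))\cdot\omega'(t)\,dt : X\in C(\Omega;\R^d),\ |X|\leq\varphi\big\}$, so that $L_\varphi$ is a supremum of functionals $\omega\mapsto\int_0^1 X(\omega(t))\cdot\omega'(t)\,dt$ whose $Q$-averages \emph{are} weakly continuous (being linear tests against $v_{Q}$), and a supremum of continuous functionals is l.s.c.

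For the fourth assertion I would exploit the same variational representation to control the defect $\|v-v_Q\|$ by the defect $i(\Omega)-i_Q(\Omega)$. Using $\|v-v_Q\|=\sup\{\int X\cdot d(v-v_Q):\|X\|_\infty\leq 1\}$, fix such an $X$; then $\int X\cdot dv=\lim_n \int X\cdot dv_{Q_n}=\lim_n\int_C\!\int_0^1 X(\omega(t))\cdot\omega'(t)\,dt\,dQ_n$, while $\int X\cdot dv_Q=\int_C\!\int_0^1 X(\omega(t))\cdot\omega'(t)\,dt\,dQ$. The difference of the two curve-integrals is bounded, using $\|X\|_\infty\leq 1$ and the l.s.c. from the previous step, by the difference of lengths; passing to the limit gives $\int X\cdot d(v-v_Q)\leq i(\Omega)-\liminf$ of the length averages $\leq i(\Omega)-i_Q(\Omega)$. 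Taking the supremum over $X$ yields the claimed inequality $\|v-v_Q\|\leq i(\Omega)-i_Q(\Omega)$. The final ``in particular'' is then immediate: if $i_{Q_n}\deb i_Q$ then $i=i_Q$ so the right-hand side vanishes, forcing $v=v_Q$; combined with the assumed weak convergence $v_{Q_n}\deb v$ this gives $v_{Q_n}\deb v_Q$. I expect the delicate bookkeeping to be in making the length-difference estimate of this last part rigorous, i.e.\ carefully comparing $\int_0^1 X(\omega(t))\cdot\omega'(t)\,dt$ with $L(\omega)$ uniformly and interchanging $\liminf$ with the supremum over $X$.
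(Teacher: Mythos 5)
Your parts (1) and (2) coincide with the paper's proof, and your overall strategy for (3) and (4) — reduce everything to lower semicontinuity of curve functionals under uniform convergence and integrate against weakly converging $Q_n$ — is also the paper's. But the key identity you propose to make (3) rigorous is false. You claim
$L_\varphi(\omega)=\sup\bigl\{\int_0^1 X(\omega(t))\cdot\omega'(t)\,dt \,:\, X\in C(\Omega;\R^d),\ |X|\leq\varphi\bigr\}$.
Take the curve that goes from $a$ to $b$ along a segment and then back to $a$: for \emph{every} continuous vector field $X$ the two passages cancel and $\int_0^1 X(\omega(t))\cdot\omega'(t)\,dt=0$, while $L_\varphi(\omega)=2\int_{[a,b]}\varphi\,d\haus^1>0$. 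The right-hand side is in general strictly smaller than $L_\varphi(\omega)$ — this is exactly the cancellation phenomenon behind the strict inequality $|v_Q|\leq i_Q$ that the paper points out. So your supremum is lower semicontinuous but it is the wrong functional, and the inequality it yields goes in the useless direction. (The auxiliary claim that $\int_C\int_0^1 X(\omega(t))\cdot\omega'(t)\,dt\,dQ$ is weakly continuous in $Q$ is also false as stated: that functional of $\omega$ is neither bounded nor continuous on all of $C$, and its failure to pass to the limit is precisely what assertion (4) quantifies.) The lower semicontinuity of $L_\varphi$ is true, but must be proved differently: the paper does it directly, assuming first $\varphi\geq\ve_0>0$ so that a finite $\liminf$ forces $\int_0^1|\omega_n'|$ bounded, extracting $|\omega_n'|\deb\xi\geq|\omega'|$, and then removing the lower bound on $\varphi$ by adding $\ve_0$ to an arbitrary test function and letting $\ve_0\to0$ in the integrated inequality.

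For (4) your plan is the right one in spirit but stops exactly where the work is. The functional $\omega\mapsto\int_0^1 X(\omega(t))\cdot\omega'(t)\,dt$ is not semicontinuous by itself; the paper's device is to add $\lambda\|X\|_\infty L(\omega)$ with $\lambda>1$, so that the sum is bounded below by $(\lambda-1)\|X\|_\infty L(\omega)\geq 0$ and is l.s.c. (the strict inequality $\lambda>1$ is what lets a finite $\liminf$ control $L(\omega_n)$ and hence give weak compactness of the derivatives), then to use $i_{Q_n}(\Omega)\to i(\Omega)$ to subtract the added term back, replace $X$ by $-X$, let $\lambda\to1$, and take the supremum over $\|X\|_\infty\leq1$. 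Your phrase ``the difference of the two curve-integrals is bounded by the difference of lengths'' is not a statement you can pass to the limit in as written; you need this explicit compensation. Once (3) is repaired along the paper's lines, the same l.s.c. argument powers (4), and your concluding ``in particular'' step is fine.
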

\begin{proof} The invariance by reparametrization comes from the fact that both $L_\varphi(\omega) $ and $\int_0^1X(\omega(t))\cdot \omega'(t)dt$ do not change under reparametrization.

The formula $\int_C L(\omega)\,dQ(\omega)=i_Q(\Omega)$ is obtained from the definition of $i_Q$ by testing with the function $1$.

To check the inequality $i\geq i_Q$, fix a positive test function $\varphi\in C(\Omega)$ and suppose $\varphi\geq \ve_0>0$. Write
\begin{equation}\label{vers igeq}
\int\varphi\,di_{Q_n}=\int_{C}  \left(\int_0^1 \varphi(\omega(t))| \omega' (t)| dt\right) d Q_n(\omega).
\end{equation}
Notice that the function $C\ni\omega\mapsto L_\phi(\omega)=\int_0^1 \varphi(\omega(t))|\omega' (t)| dt$ is positive and lower-semi-continuous w.r.t. $\omega$. Indeed, if we take a sequence $\omega_n\to \omega$, from the bound $\varphi\geq \ve_0>0$ we can assume that $\int|\omega'_n (t)| dt$ is bounded. Then we can infer $\omega'_n\deb\omega'$ weakly (as measures, or in $L^1$), which implies, up to subsequences, the existence of a measure $\xi\geq |\omega'|$ such that $|\omega'_n|\deb\xi$; moreover, $\varphi(\omega_n(t))\to \varphi(\omega(t))$ uniformly, which gives $\int \varphi(\omega_n(t))|\omega'_n(t)|dt\to \int \varphi(\omega(t))\xi(t)dt\geq \int \varphi(\omega(t))|\omega'(t)|dt$.

This allows to pass to the limit in \eqref{vers igeq}, thus obtaining
$$
\int\varphi\,d i=\lim_n\int\varphi\,di_{Q_n}=\liminf_n\int_{C} L_\varphi(\omega)d Q_n(\omega)\geq \int_{C}  L_\varphi(\omega) d Q(\omega)=\int\varphi\,d i_Q.
$$
If we take an arbitrary test function $\varphi$ (without a lower bound), add a constant $\ve_0$ and apply the previous reasoning we get $\int (\varphi+\ve_0)di\geq \int (\varphi+\ve_0)di_Q$ and, letting $\ve_0\to0$, since $i$ is a finite measure and $\varphi$ is arbitrary, we get $i\geq i_Q$.

To check the last property, fix a bounded vector test function $X$ and a number $\lambda>1$ and look at 
\begin{multline}\label{addandsub}
\int X\cdot dv_{Q_n}=\int_{C}  \left(\int_0^1 X(\omega(t))\cdot \omega' (t) dt\right) d Q_n(\omega)\\=\int_{C}  \left(\int_0^1 X(\omega(t))\cdot \omega' (t) dt+\lambda||X||_\infty L(\omega)\right) d Q_n(\omega)-\lambda||X||_\infty i_{Q_n}(\Omega),
\end{multline}
where we just added and subtracted the total mass of $i_{Q_n}$, equal to the average of $L(\omega)$ according to $Q_n$.

Now notice that $C\ni\omega\mapsto \int_0^1 X(\omega(t))\cdot \omega' (t) dt+\lambda||X||_\infty L(\omega)\geq (\lambda-1)||X||_\infty L(\omega)$ is l.s.c. in $\omega$ (use the same argument as above, noting that, if we take $\omega_n\to\omega$, we may assume $L(\omega_n)$ to be bounded, and obtain $\omega'_n\deb\omega'$). This means that if we pass to the limit in \eqref{addandsub} we get
\begin{multline*}
 \int\! X\cdot dv=\liminf_n \int\! X\cdot dv_{Q_n}\\\geq \int_{C}\!  \left(\int_0^1 \!\!X(\omega(t))\cdot \omega' (t) dt+\lambda||X||_\infty L(\omega)\!\right) d Q(\omega)-\lambda||X||_\infty i(\Omega)\\=\int\! X\cdot dv_Q+\lambda ||X||_\infty (i_Q(\Omega)-i(\Omega)).
\end{multline*}
By replacing $X$ with $-X$ we get
$$ \left|\int\! X\cdot dv-\int\! X\cdot dv_Q\right|\leq \lambda ||X||_\infty(i(\Omega)-i_Q(\Omega)).$$
Letting $\lambda\to 1$ and taking the supremum over $X$ with $||X||_\infty\leq 1$ we get $||v-v_Q||\leq i(\Omega)-i_Q(\Omega)$.

The very last property is evident, indeed one can assume up to a subsequence that $v_{Q_n}\deb v$ holds for a certain $v$, and $i=i_Q$ implies $v=v_Q$ (which also implies the full convergence of the sequence).
\end{proof}

\subsection{Where Dacorogna-Moser comes into play}
 
Let us start from a particular case of the construction in \cite{DacMos}.\smallskip

\noindent {\bf Construction :} Suppose that $v:\Omega\to\R^d$ is a Lipschitz vector field parallel to the boundary (i.e. $v\cdot n_\Omega=0$ on $\partial\Omega$) with $\nabla\cdot v=f_0-f_1$, where $f_0,f_1$ are positive probability densities which are Lipschitz continuous and bounded from below. Then we can define the non-autonomous vector field $\tilde v(t,x)$ via
$$\tilde v(t,x)=\frac{v(x)}{f_t(x)}\quad\mbox{where }f_t=(1-t)f_0+tf_1$$
and consider the Cauchy problem 
\begin{equation*}
\begin{cases}y_x'(t)=\tilde v(t,y_x(t))\\y_x(0)=x\end{cases},
\end{equation*}
We define the map $Y:\Omega\to C$ through $Y(x)=y_x(\cdot)$, and we look for the measures $Q=Y_\# f_0$ and $\rho_t:=(e_t)_\#Q$. From standard arguments on the link between the flows of the ODE and the continuity equation,  $\rho_t$ solves
$\partial_t\rho_t+\nabla\cdot(\rho_t \tilde v_t)=0$. Yet, it is easy to check that $f_t$ also solves the same equation since $\partial_t f_t=f_1-f_0$ and $\nabla\cdot (\tilde v f_t)=\nabla\cdot w=f_0-f_1$.
By uniqueness arguments (which are valid because $\tilde v_t$ is Lipschitz continuous), from $\rho_0=f_0$ we infer $\rho_t=f_t$. 

In particular, $x\mapsto y_x(1)$ is a transport map from $f_0$ to $f_1$.
\smallskip

It it interesting to compute what are the traffic intensity and the traffic flow associated to the measure $Q$ in Dacorogna-Moser construction. Fix a scalar test function $\varphi$:
\begin{multline*}
\int_{\Omega} \varphi\, di_{Q}=\int_{\Omega} \int_0^1 \varphi(y_x(t)) \vert \tilde v(t,y_x(t))\vert dt f_0(x)dx\\
=\int_0^1 \int_{\Omega} \varphi(y) \vert \tilde v(t,y)\vert f_t(y) dy dt 
=\int_{\Omega}  \varphi(y) \vert v(y) \vert dy
\end{multline*}
so that  $i_{Q}=\vert v\vert$. Analogously, fix a vector test function $X$
\begin{multline*}
\int_{\Omega} X\cdot dv_{Q}=\int_{\Omega} \int_0^1 X(y_x(t)) \cdot \tilde v(t,y_x(t))dt f_0(x)dx\\
=\int_0^1 \int_{\Omega} X(y) \cdot \tilde v(t,y) f_t(y) dy dt 
=\int_{\Omega}  X(y) \cdot v(y)  dy,
\end{multline*}
which shows $v_Q=w$ (indeed, in this case we have $|v_Q|=i_Q$ and this is due to the fact that no cancellation is possible, since all the curves share the same direction at every given point, as a consequence of the uniqueness in Cauchy-Lipschitz theorem).

With these tools we can now get closer to our main result.

\begin{lm}\label{approxveveve}
 Consider two probabilities $\mu,\nu\in\pical(\Omega)$ and a vector measure $v$ satisfying $\nabla\cdot v=\mu-\nu$ in distributional sense (with Neumann boundary conditions). Then, for every domain $\Omega'$ containing $\Omega$ in its interior, there exist a family of vector fields $v^\ve\in C^\infty(\Omega')$ with $v^\ve\cdot n_{\Omega'}=0$, and two families of densities $\mu^\ve,\nu^\ve\in C^\infty(\Omega')$, bounded from below by positive constants $c_\ve>0$, with $\nabla\cdot v^\ve=\mu^\ve-\nu^\ve$ and $\int_{\Omega'} \mu^\ve=\int_{\Omega'}\nu^\ve=1$, weakly converging to $w,\mu$ and $\nu$ as measures, respectively, and satisfying $|v^\ve|\deb |v|$.
\end{lm}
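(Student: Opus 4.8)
The plan is to obtain $v^\ve,\mu^\ve,\nu^\ve$ from a single mollification, arranging matters so that the linear constraint $\na\cdot v=\mu-\nu$, which is preserved by convolution, is not destroyed by the corrections needed to enforce the lower bounds. First I would regard $v,\mu,\nu$ as measures on all of $\R^d$ by extension by $0$ outside $\Omega$. The Neumann boundary condition is exactly what guarantees that this extension creates no divergence concentrated on $\pa\Omega$: for every $\psi\in C^1(\Omega')$ one has $\int\na\psi\cdot dv=-\int\psi\,d(\mu-\nu)$, so $\na\cdot v=\mu-\nu$ holds distributionally on the interior of $\Omega'$ as well. Fixing a standard mollifier $\eta_\ve$, I set $\tilde v^\ve:=v*\eta_\ve$, $\tilde\mu^\ve:=\mu*\eta_\ve$, $\tilde\nu^\ve:=\nu*\eta_\ve$, which are smooth; since $\Omega$ is compactly contained in $\Omega'$, for $\ve$ small enough all of them are supported in one fixed compact subset of the interior of $\Omega'$, so $\tilde v^\ve$ vanishes near $\pa\Omega'$ and the condition $\tilde v^\ve\cdot n_{\Omega'}=0$ holds trivially. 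Convolution commutes with the divergence, whence $\na\cdot\tilde v^\ve=\tilde\mu^\ve-\tilde\nu^\ve$, the three standard weak convergences $\tilde v^\ve\deb v$, $\tilde\mu^\ve\deb\mu$, $\tilde\nu^\ve\deb\nu$ hold, and $\int\tilde\mu^\ve=\int\tilde\nu^\ve=1$.

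It then remains to force the densities below without breaking the constraint, and here I would exploit that the constraint only sees the difference $\mu^\ve-\nu^\ve$. Fix a smooth density $g$ on $\Omega'$ with $g\geq c_0>0$ and $\int_{\Omega'}g=1$ (a constant if $\Omega'$ is bounded), choose $\delta_\ve\to 0$, and set
\[\mu^\ve:=\frac{\tilde\mu^\ve+\delta_\ve g}{1+\delta_\ve},\qquad \nu^\ve:=\frac{\tilde\nu^\ve+\delta_\ve g}{1+\delta_\ve},\qquad v^\ve:=\frac{\tilde v^\ve}{1+\delta_\ve}.\]
Adding the same background $\delta_\ve g$ to both densities leaves the difference unchanged up to the common factor $1/(1+\delta_\ve)$, so $\na\cdot v^\ve=\mu^\ve-\nu^\ve$ persists; the normalization gives $\int\mu^\ve=\int\nu^\ve=1$; the bound $\mu^\ve,\nu^\ve\geq \delta_\ve c_0/(1+\delta_\ve)=:c_\ve>0$ is immediate from $\tilde\mu^\ve,\tilde\nu^\ve\geq 0$; and $v^\ve$ still vanishes near $\pa\Omega'$. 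As $\delta_\ve\to 0$ we have $1/(1+\delta_\ve)\to 1$ and $\delta_\ve g\to 0$, so the three weak convergences pass from the tilded objects to $v^\ve,\mu^\ve,\nu^\ve$.

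The one genuinely delicate point, and the heart of the statement, is the no-mass-loss convergence $|v^\ve|\deb|v|$; since $|v^\ve|=|\tilde v^\ve|/(1+\delta_\ve)$ it is enough to prove $|\tilde v^\ve|\deb|v|$. The key pointwise estimate is the convexity of the norm (Jensen's inequality for the averaging kernel), $|\tilde v^\ve|=|v*\eta_\ve|\leq |v|*\eta_\ve$, which gives the domination $|\tilde v^\ve|\leq |v|*\eta_\ve$ and in particular $\|\tilde v^\ve\|\leq\|v\|$. On the other hand $\tilde v^\ve\deb v$, together with the representation $\|v\|=\sup\{\int X\cdot dv:\|X\|_\infty\leq 1\}$ and hence the weak lower semicontinuity of the total variation, yields $\|v\|\leq\liminf_\ve\|\tilde v^\ve\|$, so $\|\tilde v^\ve\|\to\|v\|$. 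Now let $\sigma$ be any weak-$*$ limit of $|\tilde v^\ve|$ along a subsequence (positive measures of bounded mass, all supported in the same compact set): passing to the limit in $|\tilde v^\ve|\leq |v|*\eta_\ve\deb|v|$ gives $\sigma\leq|v|$, while testing with a cutoff equal to $1$ on that compact set gives $\sigma(\Omega')=\lim\|\tilde v^\ve\|=\|v\|=|v|(\Omega')$; a positive measure dominated by $|v|$ with the same total mass must equal $|v|$, so $\sigma=|v|$. Every subsequential limit being $|v|$, the whole family converges, $|\tilde v^\ve|\deb|v|$. I expect precisely this last step --- reconciling the a priori loss of total variation under convolution with the exact recovery of the mass of $|v|$ --- to be the main obstacle, the domination $|v*\eta_\ve|\leq|v|*\eta_\ve$ being exactly what rules out the cancellations one would otherwise fear.
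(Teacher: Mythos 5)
Your proof is correct, and it reaches the conclusion by a genuinely different mechanism than the paper in the one place where a choice has to be made. The common core is the same (extend by zero --- legitimate precisely because of the Neumann convention, as you note --- and mollify, so that the divergence constraint is preserved). The divergence occurs in how the two defects of the mollified triple are repaired. The paper convolves with a \emph{Gaussian} kernel: this gives the strict positivity of $\hat\mu^\ve,\hat\nu^\ve$ for free, but at the price that mass leaks outside $\Omega'$ and the normal trace $\hat v^\ve\cdot n_{\Omega'}$ is only small rather than zero; both defects are then corrected by adding constants to the densities and the gradient $\nabla u^\ve$ of the solution of an auxiliary Neumann problem $\Delta u^\ve=(a_\ve-b_\ve)/|\Omega'|$, $\partial u^\ve/\partial n=-\hat v^\ve\cdot n_{\Omega'}$, with an energy estimate showing $\|\nabla u^\ve\|_{L^1}\to0$ so that $|v^\ve|\deb|v|$ survives. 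You instead take a compactly supported mollifier, so nothing leaks and the boundary condition is automatic, and you restore the lower bound by adding the \emph{same} background $\delta_\ve g$ to both densities, which cancels in $\mu^\ve-\nu^\ve$ and hence costs nothing on the constraint; this removes the elliptic auxiliary problem entirely and is arguably simpler. Your approach also adapts at least as easily to the shrinking domains $\Omega'_\ve$ of Remark \ref{approxveveverem} (one only needs $\spt\eta_\ve\subset B(0,t_\ve)$ with $t_\ve<d(\Omega,\partial\Omega'_\ve)$, with no decay estimate on the kernel tails). Finally, you prove in full the point the paper dispatches as ``general properties of the convolutions'', namely $|v*\eta_\ve|\deb|v|$, via the domination $|v*\eta_\ve|\leq|v|*\eta_\ve$, the lower semicontinuity of the total variation, and the fact that a positive measure dominated by $|v|$ with the same total mass equals $|v|$; this argument is complete and correct (the tightness needed to test with the constant $1$ is guaranteed by the common compact support).
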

\begin{proof}
First, take convolutions (in the whole space $\R^d$) with a gaussian kernel $\eta_\ve$, so that we get $\hat v^\ve:=v*\eta_\ve$ and $\hat \mu^\ve:=\mu*\eta_\ve$ $\hat \nu^\ve:=\nu*\eta_\ve$, still satisfying $\nabla\cdot \hat v^\ve=\mu^\ve-\nu^\ve$. Since the Gaussian Kernel is strictly positive, we also have strictly positive densities for  $\hat \mu^\ve$ and $\hat \nu^\ve$. These convolved densities and vector field would do the job required by the theorem, but we have to take care of the support (which is not $\Omega'$) and of the boundary behavior.

Let us set $\int_{\Omega'}\hat\mu^\ve=1-a_\ve$ and $\int_{\Omega'}\hat\nu^\ve=1-b_\ve$. It is clear that $a_\ve,b_\ve\to 0$ as $\ve\to 0$. Consider also $\hat v^\ve\cdot n_{\Omega'}$: due to $d(\Omega,\partial\Omega')>0$ and to the fact that $\eta_\ve$ goes uniformly to $0$ locally outside the origin, we also have $|\hat v^\ve\cdot n_{\Omega'}|\leq c_\ve$, with $c_\ve\to 0$.

Consider $u^\ve$ the solution to
$$\begin{cases}\Delta u^\ve=\frac{a_\ve-b_\ve}{|\Omega'|}&\mbox{ inside }\Omega'\\
			\frac{\partial u^\ve}{\partial n}=-\hat v^\ve\cdot n_{\Omega'}&\mbox{ on }\partial\Omega',\\
			\int_{\Omega'}u^\ve=0&\end{cases}$$
and the vector field $\delta^\ve=\nabla u^\ve$. Notice that a solution exists thanks to $\int_{\partial\Omega'}\hat v^\ve\cdot n_{\Omega'}=a_\ve-b_\ve$. Notice also that an integration by parts shows
$$\int_{\Omega'}|\nabla u^\ve|^2=-\int_{\partial\Omega'}u^\ve(\hat v^\ve\cdot n_{\Omega'})-
\int_{\Omega'}u^\ve\left(\frac{a_\ve-b_\ve}{|\Omega'|}\right)\leq C||\nabla u^\ve||_{L^2}(c_\ve+a_\ve+b_\ve),$$
and provides $||\nabla u^\ve||_{L^2}\leq C(a_\ve+b_\ve+c_\ve)\to 0$. This shows $||\delta^\ve||_{L^2}\to 0$.

Now take 
$$\mu^\ve=\hat\mu^\ve\res\Omega'+\frac{a_\ve}{|\Omega'|};\quad \nu^\ve=\hat\nu^\ve\res\Omega'+\frac{b_\ve}{|\Omega'|};\quad v^\ve=\hat v^\ve\res\Omega'+\delta^\ve,$$
and check that all the requirements are satisfied. In particular, the last one is satisfied since $||\delta^\ve||_{L^1}\to 0$ and $|\hat v^\ve|\deb |v|$ by general properties of the convolutions.
\end{proof}
\begin{rem}\label{approxveveverem}
Notice that considering explicitly the dependence on $\Omega'$ it is also possible to obtain the same statement with a sequence of domains $\Omega'_\ve$ converging to $\Omega$ (for instance in the Hausdorff topology). It is just necessary to choose them so that, setting $t_\ve:=d(\Omega,\partial\Omega'_\ve)$, we have $||\eta_\ve||_{L^\infty(B(0,t_\ve)^c)}\to 0$. For the Gaussian kernel, this is satisfied whenever $t_\ve^2/\ve\to\infty$ and can be guaranteed by taking $t_\ve=\ve^{1/3}$.
\end{rem}

With these tools we can now prove
\begin{prop}\label{exiQ}
For every finite vector measure $v\in\M^d(\Omega)$ and $\mu,\nu\in\pical(\Omega)$ with $\nabla\cdot v=\mu-\nu$ there exists a traffic plan $Q\in\pical(C)$ with $(e_0)_\#Q=\mu$ and $(e_1)_\#Q=\nu$  such that $|v_Q|\leq i_Q\leq |v|$, and $||v-v_Q||+||v_Q||=||v-v_Q||+i_Q(\Omega)=||v||$. In particular we have  $|v_Q|\neq |v|$ unless $v_Q=v$.
\end{prop}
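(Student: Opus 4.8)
The plan is to combine the smooth approximation of Lemma \ref{approxveveve} with the Dacorogna--Moser construction, and then to pass to the limit using only the semicontinuity properties already collected in Proposition \ref{trois prop of iQ}. First I would fix a domain $\Omega'\supset\Omega$ (shrinking to $\Omega$ as in Remark \ref{approxveveverem}) and apply Lemma \ref{approxveveve} to produce smooth fields $v^\ve$, parallel to $\partial\Omega'$, together with smooth densities $\mu^\ve,\nu^\ve$ bounded below by positive constants, with $\nabla\cdot v^\ve=\mu^\ve-\nu^\ve$, and such that $v^\ve\deb v$, $\mu^\ve\deb\mu$, $\nu^\ve\deb\nu$ and $|v^\ve|\deb|v|$. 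These are exactly the hypotheses of the Dacorogna--Moser construction, which therefore yields, for each $\ve$, a traffic plan $Q^\ve$ with $(e_0)_\#Q^\ve=\mu^\ve$, $(e_1)_\#Q^\ve=\nu^\ve$, and crucially with no cancellation, so that $i_{Q^\ve}=|v^\ve|$ and $v_{Q^\ve}=v^\ve$.

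The next step is compactness of the family $(Q^\ve)$, and this is where I expect the main technical obstacle. The flow curves $y_x(\cdot)$ of the construction are Lipschitz with constant controlled by $\|v^\ve\|_\infty/c_\ve$, which blows up as $\ve\to 0$; the only uniform bound available is on the \emph{average} length, namely $\int_C L(\omega)\,dQ^\ve=i_{Q^\ve}(\Omega')=\|v^\ve\|\to\|v\|$. To recover tightness I would reparametrize every curve to constant speed: by the reparametrization invariance in Proposition \ref{trois prop of iQ} this leaves $i_{Q^\ve}$, $v_{Q^\ve}$ and the two endpoint marginals unchanged, while on constant-speed curves one has $\Lip(\omega)=L(\omega)$, so Markov's inequality gives $\tilde Q^\ve(\{\Lip(\omega)>c\})\le\|v^\ve\|/c$ uniformly in $\ve$. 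Since the sublevel sets $\{\Lip\le c\}$ are compact by Ascoli--Arzel\`a, Prokhorov's theorem provides a subsequence with $\tilde Q^\ve\deb Q$; the limit $Q$ is a traffic plan because $L$ is lower semicontinuous. Along this subsequence $i_{\tilde Q^\ve}=|v^\ve|\deb|v|$ and $v_{\tilde Q^\ve}=v^\ve\deb v$, and continuity of $e_0,e_1$ passes the marginals to the limit, giving $(e_0)_\#Q=\mu$ and $(e_1)_\#Q=\nu$.

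Now all the estimates fall out of Proposition \ref{trois prop of iQ}. The lower-semicontinuity item gives $i_Q\le|v|$ (hence $i_Q(\Omega)\le\|v\|$ and, together with the general bound, $|v_Q|\le i_Q\le|v|$). The refined item, applied with the two limits $v$ and $|v|$, produces the decisive ``no mass loss'' inequality $\|v-v_Q\|\le|v|(\Omega)-i_Q(\Omega)=\|v\|-i_Q(\Omega)$. The reverse inequality is for free, from subadditivity of the total-variation norm and $|v_Q|\le i_Q$: indeed $\|v\|\le\|v-v_Q\|+\|v_Q\|\le\|v-v_Q\|+i_Q(\Omega)$. Hence all these quantities coincide, which simultaneously yields the mass identity $\|v-v_Q\|+i_Q(\Omega)=\|v\|$ and, since $|v_Q|(\Omega)=i_Q(\Omega)$ with $|v_Q|\le i_Q$ as measures, the equality $|v_Q|=i_Q$. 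For the final assertion, if $|v_Q|=|v|$ then $\|v_Q\|=\|v\|$, so $\|v-v_Q\|+\|v_Q\|=\|v\|$ forces $\|v-v_Q\|=0$, i.e.\ $v_Q=v$.

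It is worth stressing that the genuinely delicate content of Smirnov's Theorem~C, namely the absence of mass loss, is not an obstacle in this route: it is entirely packaged into the $\lambda$-argument already proved in Proposition \ref{trois prop of iQ}, and here it costs only one application of that estimate. The remaining care is bookkeeping: besides the reparametrization needed for tightness, one should check that the limiting measure $Q$ is concentrated on curves valued in $\Omega$ and not merely in $\Omega'$. Using the shrinking domains $\Omega'_\ve\to\Omega$ of Remark \ref{approxveveverem}, a uniform limit of curves contained in $\Omega'_{\ve}$ is contained in $\Omega$; alternatively, working with a fixed $\Omega'$, the bound $i_Q\le|v|$ forces $i_Q(\Omega'\setminus\Omega)=0$, so $Q$-a.e.\ curve is immobile outside $\Omega$ and, starting in $\spt\mu\subset\Omega$, never leaves $\Omega$.
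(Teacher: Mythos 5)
Your proposal is correct and follows essentially the same route as the paper's own proof: smooth approximation via Lemma \ref{approxveveve}, the Dacorogna--Moser construction giving $i_{Q^\ve}=|v^\ve|$ and $v_{Q^\ve}=v^\ve$, constant-speed reparametrization to get tightness from the bounded average length, and the semicontinuity estimates of Proposition \ref{trois prop of iQ} to pass to the limit and obtain both $|v_Q|\leq i_Q\leq |v|$ and the no-mass-loss identity. The only (harmless) variation is your alternative argument for confining the limit curves to $\Omega$; the paper simply uses the shrinking domains of Remark \ref{approxveveverem}.
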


\begin{proof}
By means of Lemma \ref{approxveveve} and Remark \ref{approxveveverem} we can produce an approximating sequence $(v^\ve,\mu^\ve,\nu^\ve)\deb(w,\mu,\nu)$ of $C^\infty$ functions supported on domains $\Omega_\ve$ converging to $\Omega$. We apply Dacorogna-Moser's construction to this sequence of vector fields, thus obtaining a sequence of measures $Q_\ve$. We can consider these measures as probability measures on $\Lip([0,1];\Omega')$ (where $\Omega\subset\Omega_\ve\subset\Omega'$) which are, each, concentrated on curves valued in $\Omega_\ve$. They satisfy $i_{Q_\ve}=|v^\ve|$ and $v_{Q_\ve}=v^\ve$. We can reparametrize (without changing their names) by constant speed the curves on which $Q_\ve$ is supported, without changing traffic intensities and traffic flows. This means using curves $\omega$ such that $L(\omega)=\Lip(\omega)$. The equalities
$$\int_C \Lip(\omega)\,dQ_\ve(\omega)=\int_C L(\omega)\,dQ_\ve(\omega)=\int_{\Omega'}i_{Q_\ve}=\int_{\Omega'}|v^\ve|\to |v|(\Omega')=|v|(\Omega)$$
show that $\int_C \Lip(\omega)\,dQ_\ve(\omega)$ is bounded and $Q_\ve$ is tight. Hence, up to subsequences, we can assume $Q_\ve\deb Q$. The measure $Q$ is obviously concentrated on curves valued in $\Omega$. The measures $Q_\ve$ were constructed so that $(e_0)_\#Q_\ve=\mu^\ve$ and $(e_1)_\#Q_\ve=\nu^\ve$, which implies, at the limit, $(e_0)_\#Q=\mu$ and $(e_1)_\#Q=\nu$. Moreover, thanks to Proposition \ref{trois prop of iQ}, since $i_{Q_\ve}=|w_\ve|\deb |v|$ and $v_{Q_\ve}\deb v$, we get $|v|\geq i_Q\geq |v_Q|$ and $||v-v_Q||\leq |v|(\Omega)-i_Q(\Omega)$. This gives $||v-v_Q||+||v_Q||\leq ||v-v_Q||+i_Q(\Omega)\leq||v||$ and the opposite inequality $||v||\leq ||v-v_Q||+||v_Q||$ is always satisfied. 
\end{proof}

\begin{rem}
The previous statement contains that of Theorem C in \cite{smirnov}, i.e. the decomposition of any $v$ into a cycle $v-v_Q$ and a flow $v_Q$ induced by a measure on paths , with $||v||=||v-v_Q||+||v_Q||$.
\end{rem}

\begin{rem}
It is possible to guess what happens to a cycle through this construction. Imagine the following example: $\Omega$ is composed of two parts $\Omega^+$ and $\Omega^-$, with $\spt\mu\cup\spt\nu\subset\Omega^+$ and a cycle of $v$ is contained in $\Omega^-$. When we build the approximations $\mu^\ve$ and $\nu^\ve$ they will give a positive mass to $\Omega^-$, but very small. Because of the denominator in the definition of $\tilde v$, the curves produced by Dacorogna-Moser will follow this cycle very fast, passing many times on each point of the cycle. Hence, the flow $v^\ve$ in $\Omega^-$ is obtained from a very small mass which passes many times. This implies that the measure $Q_\ve$ of this set of curves will disappear at the limit $\ve\to 0$. Hence, the measure $Q$ will be concentrated on curves staying in $\Omega^+$ and $v_Q=0$ on $\Omega^-$. In this way, we got rid of that particular cycle, but the same does not happen for cycles located in regions with positive masses of $\mu$ and $\nu$. In particular nothing guarantees that $v_Q$ has no cycles.
\end{rem}

\section{Application to optimal flows}

In this section we try to get some (easy) conclusions from the previous proofs. 

\begin{teo}\label{car of min}
Consider a minimization problem
$$\min \quad \left\{F(|v|)\;:\; v\in\M^d(\Omega),\,\nabla\cdot v =\mu-\nu\right\}$$
where $F:\M_+(\Omega)\to\R\cup\{+\infty\}$ is an increasing functional, i.e. $\alpha\leq \beta\impl F(\alpha)\leq F(\beta)$.
Then if a minimizer $v$ exists, there is also a minimizer of the form $v_Q$, where $Q$ is an admissible traffic plan connecting $\mu$ to $\nu$.
Moreover, if $F$ is strictly increasing (i.e. $\alpha\leq \beta, F(\alpha)=F(\beta)\impl \alpha=\beta$), any minimizer $v$ must be of the form $v_Q$.
\end{teo}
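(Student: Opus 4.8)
The plan is to deduce the statement directly from Proposition \ref{exiQ}, which does all the real work. Given a minimizer $v$ of the problem, I would first apply Proposition \ref{exiQ} to $v$ (which is legitimate, since $v\in\M^d(\Omega)$ is a finite vector measure satisfying $\nabla\cdot v=\mu-\nu$, and $\mu,\nu$ are probabilities): this yields an admissible traffic plan $Q$ connecting $\mu$ to $\nu$, with $|v_Q|\leq i_Q\leq |v|$ as positive measures. The admissibility of $Q$ guarantees $\nabla\cdot v_Q=\mu-\nu$ (this is exactly the identity obtained earlier in the section by testing the definition of $v_Q$ against gradient fields $X=\nabla\psi$), so $v_Q$ is itself a competitor in the minimization problem.

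Next I would exploit the monotonicity of $F$. Since $|v_Q|\leq |v|$ and $F$ is increasing, we get $F(|v_Q|)\leq F(|v|)$. On the other hand, $v$ is a minimizer and $v_Q$ is admissible, so $F(|v|)\leq F(|v_Q|)$. Combining the two inequalities gives $F(|v_Q|)=F(|v|)$, which shows that $v_Q$ is also a minimizer and proves the first assertion.

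For the second assertion, assume in addition that $F$ is strictly increasing. From $|v_Q|\leq |v|$ together with the equality $F(|v_Q|)=F(|v|)$ just established, strict monotonicity forces $|v_Q|=|v|$. The final clause of Proposition \ref{exiQ}, namely that $|v_Q|\neq |v|$ unless $v_Q=v$, then yields $v=v_Q$. Hence every minimizer is of the form $v_Q$.

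I do not expect a genuine obstacle here: the theorem is essentially a corollary of Proposition \ref{exiQ}, and the only point that deserves a line of justification is the admissibility of $v_Q$ (i.e. that the traffic plan $Q$ produces a flow with the correct divergence), which is immediate from the construction of $v_Q$ and the Neumann convention adopted for the divergence. The heart of the argument---building $Q$ with $|v_Q|\leq|v|$ and with $|v_Q|=|v|$ only when $v_Q=v$---has already been carried out in Proposition \ref{exiQ}. I would only emphasize that the monotonicity hypotheses on $F$ are used precisely in the two directions sketched above: the plain increasing property to pass from $v$ to the competitor $v_Q$, and the strict version to upgrade the measure equality $|v_Q|=|v|$ into the identity $v=v_Q$.
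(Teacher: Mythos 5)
Your argument is correct and is essentially identical to the paper's own proof: both deduce the theorem directly from Proposition \ref{exiQ}, using plain monotonicity of $F$ to show $v_Q$ is a competitor achieving the minimum, and strict monotonicity plus the final clause of Proposition \ref{exiQ} to force $v=v_Q$. Your extra remark on the admissibility of $v_Q$ (via the divergence identity) is a reasonable point to make explicit, but it is the same route.
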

\begin{proof}
These facts are easy consequences of Proposition \ref{exiQ}. Indeed, take a minimizer $v$ and build a traffic plan $Q$ such that $|v_Q|\leq |v|$. By the monotonicity of $F$ it is clear that $v_Q$ must also be a minimizer. Moreover, it must satisfy $F(|v|)=F(|v_Q|)$, since the value  $F(|v_Q|)$ cannot be smaller than the minimal value $F(|v|)$. Hence, if $F$ is strictly increasing, we must have $|v|=|v_Q$ but in this case Proposition \ref{exiQ} allows to conclude $v=v_Q$.
\end{proof}

 In order to complete our analysis, we use the previous result to characterize the optimal flows in the Beckmann's version of the optimal transport problem. To do that, let us first recall the main points of Beckmann's model.
 
 The problem, that Beckmann proposed in \cite{beck} under the name {\it Continuous model of transportation} is the following: given two measures $\mu$ and $\nu$, find the vector field $v$ satisfying $\nabla\cdot v=\mu-\nu$ with minimal $L^1$ norm. In the language of measures, this becomes 
 $$(PB)\qquad\min\quad\left\{ |v|(\Omega)\;:\; v\in\M^d(\Omega),\,\nabla\cdot v =\mu-\nu\right\}.$$
The existence of an optimal measure $v$ is straightforward, and in some cases one can also prove that it is actually absolutely continuous, with $L^1$ or $L^p$ density (see \cite{simpleproof}  for the most recent results on this issues). 

It happens that, even if Beckmann was not aware of it, this problem is indeed strongly connected with the Monge-Kantorovich problem for cost $c(x,y)=|x-y|$. Indeed, the minimal value of (PB) is equal to the minimal value of 
$$(PK)\qquad\min\left\{\int_{\Omega\times\Omega}|x-y|\,\,d\gamma\;:\;\gamma\in\Pi(\mu,\nu)\right\},$$
where the set
$\Pi(\mu,\nu)$ is the set of the so-called {\it transport plans}, i.e. $\Pi(\mu,\nu)=\{\gamma\in\pical(\Omega\times\Omega):\,(\pi_x)_{\#}\gamma=\mu,\,(\pi_y)_{\#}\gamma=\nu,\}$
where $\pi_x$ and $\pi_y$ are the two projections of $\Omega\times\Omega$ onto $\Omega$. Moreover, it is possible to produce a minimizer $v_{[\gamma]}$ for (PB) starting from a minimizer $\gamma$ for (PK) by defining $v_{[\gamma]}$ as a measure in the following way:
$$
\int X\,dv_{[\gamma]}:=\int_{\Omega\times\Omega}\int_0^1 \omega'_{x,y}(t)\cdot X(\omega_{x,y}(t))dt\, d\gamma,
$$
for every $X\in C^0(\Omega;\R^d)$, $\omega_{x,y}$ being a parametrization of the segment $[x,y]$. It is clear that this definition is nothing but a particular case of the definition of $v_Q$, when we take $Q=S_\#\gamma$ and $S$ is the map $\Omega\times\Omega\ni (x,y)\mapsto \omega_{x,y}\in C$ (where, to avoid ambiguities, we choose the segment $\omega_{x,y}$ parametrized with constant speed: $\omega_{x,y}(t)=(1-t)x+ty$).

The final result we present is exactly the fact that all the minimizers $v$ of (PB) must be of the form $v_{[\gamma]}$.

\begin{teo}\label{vvQvgam}
Let $v$ be optimal in $(PB)$: then there is an optimal transport plan $\gamma$ such that $v=v_{[\gamma]}$.
\end{teo}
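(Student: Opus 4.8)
The plan is to reduce the problem to a representation $v=v_Q$ and then to squeeze a chain of inequalities that forces the curves carried by $Q$ to be straight segments and the induced plan to be optimal for $(PK)$.

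\emph{Step 1: reduction to a flow induced by curves.} I would apply Proposition~\ref{exiQ} to the optimal $v$, obtaining an admissible traffic plan $Q$ with $(e_0)_\#Q=\mu$, $(e_1)_\#Q=\nu$, $|v_Q|\le i_Q\le|v|$ and the mass identity $||v-v_Q||+||v_Q||=||v-v_Q||+i_Q(\Omega)=||v||$, from which $||v_Q||=i_Q(\Omega)$. Since $v_Q$ is itself admissible for $(PB)$, it satisfies $||v_Q||\ge\min(PB)=||v||$; on the other hand $||v_Q||=i_Q(\Omega)\le|v|(\Omega)=||v||$. Hence $||v_Q||=||v||$, so $v_Q$ is a minimizer too, and feeding this back into the mass identity gives $||v-v_Q||=0$, i.e. $v=v_Q$, with in particular $i_Q(\Omega)=||v||=\min(PB)$.

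\emph{Step 2: the inequalities collapse.} Let $\gamma:=(e_0,e_1)_\#Q$, which lies in $\Pi(\mu,\nu)$. Using $i_Q(\Omega)=\int_C L(\omega)\,dQ(\omega)$ from Proposition~\ref{trois prop of iQ}, the elementary bound $L(\omega)\ge|\omega(1)-\omega(0)|$, and the fact that $\gamma$ is a competitor in $(PK)$, I would write
\[
\min(PB)=i_Q(\Omega)=\int_C L(\omega)\,dQ(\omega)\ \ge\ \int_C|\omega(1)-\omega(0)|\,dQ(\omega)=\int|x-y|\,d\gamma\ \ge\ \min(PK)=\min(PB),
\]
the last equality being the stated equivalence of the two minimal values. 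Every inequality is therefore an equality. The middle one forces $L(\omega)=|\omega(1)-\omega(0)|$ for $Q$-a.e.\ $\omega$, so $Q$-almost every curve is a reparametrized straight segment joining its endpoints; the last one says exactly that $\gamma$ is optimal for $(PK)$.

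\emph{Step 3: identification with $v_{[\gamma]}$.} I would reparametrize each curve of $Q$ by constant speed, which by Proposition~\ref{trois prop of iQ} changes neither $v_Q$ nor the endpoint pair $(\omega(0),\omega(1))$. After this, $Q$-a.e.\ curve coincides with the constant-speed segment $\omega_{x,y}$ attached to its endpoints $(x,y)$; since a segment is determined by its endpoints, the map $\omega\mapsto(\omega(0),\omega(1))$ is the inverse of $S$ on $\spt Q$, whence $Q=S_\#\gamma$ and thus $v=v_Q=v_{S_\#\gamma}=v_{[\gamma]}$ with $\gamma$ optimal. The analytic work is entirely absorbed by Proposition~\ref{exiQ}, so the main thing to watch will be Step~2: the collapse of the chain must be read correctly, and one must use that ``length $=$ chord'' really pins each curve to the segment between its endpoints (which, in the convex setting underlying the cost $|x-y|$, stays inside $\Omega$). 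The identification $Q=S_\#\gamma$ in Step~3 is then only a matter of reparametrization invariance and the uniqueness of the segment between two points.
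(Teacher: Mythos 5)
Your proof is correct and follows essentially the same route as the paper: the paper invokes Theorem~\ref{car of min} (whose proof is precisely your Step~1 argument via Proposition~\ref{exiQ}) to get $v=v_Q$, then runs the identical chain $\min(PB)=i_Q(\Omega)=\int_C L(\omega)\,dQ\ge\int|x-y|\,d\gamma\ge\min(PK)=\min(PB)$ to force $Q$ onto segments and $\gamma=(e_0,e_1)_\#Q$ onto optimality. Your Step~3 merely spells out the identification $Q=S_\#\gamma$ (via constant-speed reparametrization) that the paper leaves implicit.
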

\begin{proof} Thanks to Theorem \ref{car of min}, since $F(\alpha)=|\alpha|(\Omega)$ is strictly increasing, we can find an admissible traffic plan $Q\in \pical(C)$ with $(e_0)_\#Q=\mu$ and  $(e_1)_\#Q=\nu$ such that $v=v_Q$. We can assume $Q$ to be concentrated on curves parametrized by constant speed. 
The statement is proven if we can prove that $Q=S_\#\gamma$ with $\gamma$ an optimal transport plan.

Indeed, using again the optimality of $v$ and Proposition \ref{exiQ}, we get
\begin{multline*}
\min(PB)=||v||=i_Q(\Omega)= \int_C L(\omega)\,dQ(\omega)\geq \int_C|\omega(0)-\omega(1)|dQ(\omega)\\=\int_{\Omega\times\Omega}|x-y|\,d((e_0,e_1)_\# Q)\geq \min (PK).$$
\end{multline*}
The equality $\min(PB)=\min(PK)$ implies that all these inequalities are equalities. In particular $Q$ must be concentrated on curves such that $L(\omega)=|\omega(0)-\omega(1)|$, i.e. segments. Also, the measure $(e_0,e_1)_\# Q$, which belongs to $\Pi(\mu,\nu)$, must be optimal in $(PK)$. This concludes the proof.
\end{proof}

\end{document}